\documentclass[11pt]{amsart}

\usepackage{amsmath,amssymb,amsthm}
\usepackage{mathrsfs}

\newtheorem{theorem}{Theorem}[section]

\newtheorem{corollary}[theorem]{Corollary}

\usepackage{hyperref}             % 讓文件內的引用和網址具有超連結功能 (選用)

\title[
A Variance Representation Formula for Hölder's Inequality
]
{
A Variance Representation Formula for Hölder's Inequality
}

\author{Li-Chang Hung}
\address{Department of Mathematics, Soochow University, Taipei, Taiwan}
\email{lichang.hung@gmail.com }
\thanks{%The first author was supported in part by NSF Grant \#000000.
}

\begin{document}

\maketitle

\begin{abstract}

We establish an exact representation formula for the deficit
in Hölder's inequality. 
Rather than estimating the deficit by auxiliary quantities,
we show that it can be represented as an accumulated variance
along the natural exponential interpolation connecting the two
endpoint densities.

More precisely, the logarithmic Hölder deficit is expressed as
the integral of the variance of the logarithmic density ratio
weighted by the Green kernel associated with the one-dimensional
interpolation parameter.

This identity reveals that Hölder's inequality is a consequence
of the convexity of a logarithmic partition function and provides
an intrinsic interpretation of the deficit as an interpolation
energy.

The representation suggests a broader framework for studying
functional inequalities through variance identities.

\end{abstract}

\section{Introduction}

Hölder's inequality is one of the fundamental inequalities in
analysis and plays a central role in functional analysis, harmonic
analysis, probability theory, and partial differential equations.
The classical form states that for

\[
1<p,q<\infty,
\qquad
\frac1p+\frac1q=1 ,
\]

one has

\[
\int fg\,dx
\leq
\|f\|_{L^p}
\|g\|_{L^q}.
\]

This inequality and its numerous extensions form a cornerstone of
modern analysis; see, for example, the classical monograph of
Hardy, Littlewood, and Pólya \cite{Hardy1952} and the modern
treatment in Lieb and Loss \cite{LiebLoss2001}.

A fundamental viewpoint for understanding Hölder's inequality is
through interpolation and logarithmic convexity. Given two
non-negative functions

\[
F=f^p,
\qquad
G=g^q ,
\]

one considers the interpolation functional

\[
Z(\theta)
=
\int
F^{1-\theta}G^\theta dx ,
\qquad
0\leq\theta\leq1 .
\]

The convexity of

\[
\theta\mapsto \log Z(\theta)
\]

implies Hölder's inequality through the classical theory of convex
functions. Such ideas are closely related to the general framework
of convex analysis developed by Rockafellar \cite{Rockafellar1970}.

In recent years, increasing attention has been devoted to the study
of quantitative refinements of functional inequalities. Instead of
only proving

\[
\int fg\leq \|f\|_p\|g\|_q ,
\]

one investigates the structure of the deficit

\[
\log
\frac{
\|f\|_p\|g\|_q
}{
\int fg
}.
\]

Such deficit estimates appear naturally in sharp functional
inequalities, stability theory, and entropy methods. Related ideas
can be found in the study of logarithmic Sobolev inequalities,
Fisher information, and stability estimates for Sobolev-type
inequalities; see Carlen \cite{Carlen1991} and
Cordero-Erausquin, Nazaret, and Villani \cite{CorderoErausquin2004}.

The purpose of this paper is to reveal an exact structure hidden
inside the Hölder deficit. Rather than estimating the deficit by
auxiliary quantities, we prove an exact representation formula.

The main observation is that the curvature of the logarithmic
partition function is given by a variance identity:

\[
(\log Z)''(\theta)
=
\operatorname{Var}_{\mu_\theta}
(
\log G-\log F
),
\]

where

\[
d\mu_\theta
=
\frac{
F^{1-\theta}G^\theta
}{
Z(\theta)
}
dx .
\]

Thus, the convexity of $\log Z$ is not merely an abstract property:
its second derivative is exactly the fluctuation of the logarithmic
density ratio under the interpolating probability measure.

This identity is closely related to the geometry of exponential
families. In information geometry, the Hessian of the logarithmic
partition function is identified with covariance information of the
corresponding statistical model. The general framework developed in
information geometry can be found in Amari and Nagaoka
\cite{Amari2000}, while the probabilistic foundations of entropy and
information quantities are described in Cover and Thomas
\cite{CoverThomas2006}.

Using the Green representation formula for one-dimensional
interpolation, we transform this local variance identity into a
global exact representation of the Hölder deficit. The main result of
this paper is

\[
\boxed{
\log
\frac{
\|f\|_p\|g\|_q
}{
\int fg
}
=
\int_0^1
K_{1/q}(s)
\operatorname{Var}_{\mu_s}
(
\log G-\log F
)
ds .
}
\]

Therefore, the Hölder deficit is exactly the accumulated variance
energy generated along the exponential interpolation path connecting
the endpoint densities.

This representation provides a new interpretation of Hölder's
inequality. The inequality itself follows immediately from the
non-negativity of variance, while the equality condition corresponds
to the complete disappearance of fluctuations along the
interpolation.

The structure developed here is also related to broader families of
functional inequalities. In particular, the Brascamp--Lieb
inequalities provide a general framework containing many classical
inequalities as special cases and are deeply connected with
log-convexity and Gaussian extremizers
\cite{BrascampLieb1976}. The variance representation obtained in
this paper suggests a possible direction toward exact deficit
identities for more general logarithmically convex inequalities,
including Young's inequality, Brascamp--Lieb inequalities, and
Prékopa--Leindler type inequalities.

The paper is organized as follows. In Section 2 we establish the
main variance representation theorem and provide a complete proof.
Section 3 derives several consequences, including the recovery of
Hölder's inequality, the equality characterization, and a
quantitative strict version. Section 4 presents explicit Gaussian
examples verifying the identity. Finally, Section 5 discusses
possible extensions and connections with other functional
inequalities.

\section{Introduction}

Hölder's inequality is one of the fundamental inequalities in
analysis. Let

\[
1<p,q<\infty,
\qquad
\frac1p+\frac1q=1 .
\]

For non-negative measurable functions $f$ and $g$, Hölder's
inequality states that

\[
\int fg
\leq
\|f\|_{L^p}
\|g\|_{L^q}.
\]

The inequality plays a central role in functional analysis,
partial differential equations, probability theory, and harmonic
analysis.

A classical way to understand Hölder's inequality is through the
logarithmic convexity of the interpolation functional

\[
Z(\theta)
=
\int
F^{1-\theta}G^\theta ,
\]

where

\[
F=f^p,
\qquad
G=g^q .
\]

The convexity of $\log Z(\theta)$ yields the classical estimate.
However, the precise mechanism behind the strictness of this
convexity has received increasing attention in quantitative forms
of functional inequalities.

The purpose of this paper is to identify the exact structure of
the Hölder deficit. Instead of deriving another lower bound for
the deficit, we prove an exact representation formula:

\[
\log
\frac{
\|f\|_p\|g\|_q
}{
\int fg
}
\]

is equal to an integrated variance quantity along the exponential
interpolation between $F$ and $G$.

The key observation is that the logarithmic partition function
satisfies

\[
(\log Z)''(\theta)
=
\operatorname{Var}_{\mu_\theta}
(\log G-\log F),
\]

where

\[
d\mu_\theta
=
\frac{
F^{1-\theta}G^\theta
}{
Z(\theta)
}
dx .
\]

Thus the second derivative of the interpolation functional is not
merely non-negative; it is exactly a fluctuation term.

Using the Green representation formula for convex functions, we
convert this infinitesimal variance identity into a global exact
formula for the Hölder deficit.

The main theorem shows that

\[
\boxed{
\log
\frac{
\|f\|_p\|g\|_q
}{
\int fg
}
=
\int_0^1
K_{1/q}(s)
\operatorname{Var}_{\mu_s}
(\log G-\log F)
\,ds .
}
\]

This identity provides a new interpretation of Hölder's inequality:
the deficit is the total variance energy accumulated along the
exponential interpolation path.

The result connects Hölder's inequality with ideas from convex
analysis, exponential families, and information geometry.
It also suggests a possible unified approach to other logarithmically
convex inequalities, including Young's inequality,
Brascamp--Lieb inequalities, and Prékopa--Leindler type inequalities.

The paper is organized as follows. In Section 2 we prove the main
variance representation theorem. Section 3 derives several
consequences, including Hölder's inequality, the equality
characterization, and a quantitative strict version. Section 4
provides explicit Gaussian examples. Finally, Section 5 discusses
possible extensions.

\section{Main Variance Representation Formula}

We begin with the definition of the interpolation functional.
Let

\[
F=f^p,
\qquad
G=g^q .
\]

For

\[
0\leq \theta\leq 1,
\]

define

\[
Z(\theta)
=
\int
F^{1-\theta}G^\theta dx .
\]

Associated with this interpolation, we introduce the probability
measure

\[
d\mu_\theta(x)
=
\frac{
F^{1-\theta}(x)G^\theta(x)
}{
Z(\theta)
}
dx .
\]

The following theorem gives an exact formula for the Hölder deficit.

\begin{theorem}[Exact Variance Representation of Hölder Deficit]
Let

\[
1<p,q<\infty,
\qquad
\frac1p+\frac1q=1,
\]

and let

\[
f,g\geq0
\]

satisfy

\[
0<
\int f^p dx<\infty ,
\qquad
0<
\int g^q dx<\infty .
\]

Define

\[
F=f^p,
\qquad
G=g^q,
\]

and

\[
Z(\theta)
=
\int
F^{1-\theta}G^\theta dx .
\]

Let

\[
d\mu_\theta
=
\frac{
F^{1-\theta}G^\theta
}{
Z(\theta)
}
dx .
\]

Then

\[
\boxed{
\log
\frac{
\|f\|_{L^p}
\|g\|_{L^q}
}{
\int fg dx
}
=
\int_0^1
K_{1/q}(s)
\operatorname{Var}_{\mu_s}
\left(
\log G-\log F
\right)
ds ,
}
\]

where

\[
K_{1/q}(s)
=
\begin{cases}
\dfrac{s}{p},
&
0\leq s\leq\dfrac1q ,
\\[2ex]
\dfrac{1-s}{q},
&
\dfrac1q\leq s\leq1 .
\end{cases}
\]

\end{theorem}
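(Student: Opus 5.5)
Set $\phi(\theta)=\log Z(\theta)$ and $t=1/q$. The plan is to read the left-hand side as the gap between a chord of $\phi$ and its value at $t$, and then evaluate that gap with the Green function of $-d^2/d\theta^2$ on $[0,1]$ with Dirichlet conditions.

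\textbf{Step 1 (identify the endpoints).} We have $Z(0)=\int F=\|f\|_p^p$ and $Z(1)=\int G=\|g\|_q^q$. At $\theta=t$,
\[
F^{1-t}G^{t}=f^{p/p}g^{q/q}=fg,
\]
so $Z(1/q)=\int fg$. Hence
\[
(1-t)\phi(0)+t\phi(1)-\phi(t)=\tfrac1p\log\|f\|_p^p+\tfrac1q\log\|g\|_q^q-\log\int fg,
\]
which is exactly the logarithmic deficit on the left. This step assumes $\int fg>0$. If $\int fg=0$, then $FG=0$ a.e., so $Z\equiv 0$ on $(0,1)$. In that case both sides are interpreted as $+\infty$, or the case is excluded; I would state this convention explicitly.

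\textbf{Step 2 (the variance identity).} Write $h=\log G-\log F$ on the set $\{F>0,\ G>0\}$, where $F^{1-\theta}G^\theta=Fe^{\theta h}$. On $(0,1)$ we then have
\[
Z'(\theta)=\int hFe^{\theta h},\qquad Z''(\theta)=\int h^2Fe^{\theta h},
\]
and therefore $\phi''=Z''/Z-(Z'/Z)^2=\operatorname{Var}_{\mu_\theta}(h)$.

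Differentiating under the integral sign is justified on each compact subinterval $[a,b]\subset(0,1)$ by the following bound. For $\theta\in[a,b]$,
\[
|h|^k e^{\theta h}F\le C_{k,\varepsilon}\bigl(e^{(\theta-\varepsilon)h}+e^{(\theta+\varepsilon)h}\bigr)F,
\]
and the right-hand side is integrable because $F^{1-s}G^s\le (1-s)F+sG$ (by Young's inequality) is integrable for every $s\in[0,1]$. Two further points need care. First, the set where exactly one of $F,G$ vanishes contributes only at the endpoints $\theta=0,1$. Second, $\phi$ is continuous on $[0,1]$: this follows from dominated convergence using the same Young bound, with $Z(0)$ and $Z(1)$ understood as $\int F$ and $\int G$ over their full supports. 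Reconciling these two points is where the bookkeeping lies.

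\textbf{Step 3 (Green representation).} For $\phi\in C([0,1])\cap C^2(0,1)$ that is convex, we use the Dirichlet Green function $K_t(s)=s(1-t)$ for $s\le t$ and $K_t(s)=t(1-s)$ for $s\ge t$. With $t=1/q$ and $1-t=1/p$, this is exactly the kernel in the statement. The identity to prove is
\[
(1-t)\phi(0)+t\phi(1)-\phi(t)=\int_0^1K_t(s)\phi''(s)\,ds.
\]
I would prove it by integrating by parts twice on $[\varepsilon,1-\varepsilon]$ and letting $\varepsilon\to0$. Concretely, $\int_\varepsilon^t s\phi''(s)\,ds=t\phi'(t)-\varepsilon\phi'(\varepsilon)-\phi(t)+\phi(\varepsilon)$, with the analogous formula on $[t,1-\varepsilon]$; the $\phi'(t)$ terms cancel.

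\textbf{Main obstacle: the limit $\varepsilon\to0$.} One must show $\varepsilon\phi'(\varepsilon)\to0$, and similarly $\varepsilon\phi'(1-\varepsilon)\to0$ at the other end, even though $\phi'$ may blow up there. I would handle this using convexity alone. Since $\phi'$ is monotone, the mean value theorem gives
\[
\phi'(\varepsilon)\le\frac{\phi(2\varepsilon)-\phi(\varepsilon)}{\varepsilon},\qquad \phi'(\varepsilon)\ge\phi'(\varepsilon)\ \text{bounded below near }0\text{ only by }\frac{\phi(\varepsilon)-\phi(0)}{\varepsilon}\text{-type estimates}.
\]
More precisely, from $\phi'(\varepsilon)\ge(\phi(\varepsilon)-\phi(0))/\varepsilon$ (convexity on $[0,\varepsilon]$) and the upper bound above, continuity of $\phi$ at $0$ forces $\varepsilon\phi'(\varepsilon)\to0$. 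Alternatively, one can avoid boundary terms altogether by applying monotone convergence to the nonnegative integrand $K_t\phi''$ together with the exact identity on $[\varepsilon,1-\varepsilon]$, with boundary values adjusted. Once this limit is established, combining Steps 1–3 gives the boxed formula.
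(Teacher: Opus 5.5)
Your route is the paper's route: the variance identity for $(\log Z)''$, the Dirichlet Green function on $[0,1]$ evaluated at $t=1/q$, and the identification of $Z(0)$, $Z(1)$, $Z(1/q)$. Your interior justifications are sound: the Young bound for differentiating under the integral, and the squeeze $\phi(\varepsilon)-\phi(0)\le\varepsilon\phi'(\varepsilon)\le\phi(2\varepsilon)-\phi(\varepsilon)$. They are more careful than the paper, which applies the Green formula to $\log Z$ as if it were $C^2$ on the closed interval, even though $(\log Z)'$ can blow up at the endpoints.

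The genuine gap is the step you defer as ``bookkeeping.'' The claim that $\phi$ is continuous on $[0,1]$ with $Z(0)=\int F$ and $Z(1)=\int G$ is false in general. Your two ``points needing care'' cannot be reconciled: the first is exactly why $Z$ jumps at the endpoints. On $\{F>0,\,G=0\}$ the integrand $F^{1-\theta}G^\theta$ vanishes for every $\theta\in(0,1)$, so dominated convergence gives only $Z(0^+)=\int_{\{G>0\}}F$, and symmetrically $Z(1^-)=\int_{\{F>0\}}G$.

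This is not a technicality: the boxed identity is then false. Take $f=\chi_{[0,2]}$ and $g=\chi_{[0,1]}$ on $\mathbb R$. The left side equals $\frac1p\log 2>0$. But for every $s\in(0,1)$ the measure $\mu_s$ is the uniform probability on $[0,1]$, where $\log G-\log F\equiv 0$, so the right side is $0$.

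What your argument actually proves is the Green identity for $\tilde\phi(\theta)=\log\int_S F^{1-\theta}G^\theta$, where $S=\{F>0,\,G>0\}$. This function is continuous on $[0,1]$, since the integrand is dominated by $F+G$ on $S$, and it equals $\log Z$ on $(0,1)$. Combined with your Step 1 it yields
\[
\log\frac{\|f\|_p\|g\|_q}{\int fg}
=\int_0^1K_{1/q}(s)\operatorname{Var}_{\mu_s}(\log G-\log F)\,ds
+\frac1p\log\frac{\int F}{\int_{\{G>0\}}F}
+\frac1q\log\frac{\int G}{\int_{\{F>0\}}G}.
\]
To obtain the statement as written, you need an extra hypothesis: $\{f>0\}$ and $\{g>0\}$ coincide up to a null set, which is equivalent to the two correction terms vanishing. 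Under that hypothesis your dominated-convergence argument does give continuity of $\phi$ at both endpoints, and the rest of your proof goes through as written. The paper's proof silently relies on the same assumption when it treats $\log Z$ as twice differentiable on $[0,1]$.
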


\begin{proof}

Define

\[
\Phi(x,\theta)
=
(1-\theta)\log F(x)
+
\theta\log G(x).
\]

Then

\[
Z(\theta)
=
\int e^{\Phi(x,\theta)}dx .
\]

Since $\Phi$ is linear in $\theta$, we have

\[
\Phi_{\theta\theta}=0 .
\]

We first compute the first derivative of $\log Z$.

Differentiating under the integral sign gives

\[
Z'(\theta)
=
\int
\Phi_\theta e^\Phi dx .
\]

Therefore,

\[
\frac{Z'(\theta)}{Z(\theta)}
=
\frac{
\int
\Phi_\theta e^\Phi dx
}{
\int e^\Phi dx
}.
\]

Using the probability measure $\mu_\theta$, this becomes

\[
(\log Z)'(\theta)
=
E_{\mu_\theta}
(\Phi_\theta).
\]

Because

\[
\Phi_\theta
=
\log G-\log F ,
\]

we obtain

\[
(\log Z)'(\theta)
=
E_{\mu_\theta}
(
\log G-\log F
).
\]

We next differentiate once more.

For a general function $H(x,\theta)$, we have

\[
\frac d{d\theta}
E_{\mu_\theta}(H)
=
E_{\mu_\theta}(H_\theta)
+
\operatorname{Cov}_{\mu_\theta}
(H,\Phi_\theta).
\]

Indeed,

\[
E_{\mu_\theta}(H)
=
\frac{
\int H e^\Phi dx
}{
Z(\theta)
}.
\]

Differentiating,

\[
\begin{aligned}
\frac d{d\theta}E_{\mu_\theta}(H)
&=
\frac{
\int(H_\theta+H\Phi_\theta)e^\Phi dx
}{Z}
\\
&\quad
-
\frac{Z'}{Z}
\frac{\int He^\Phi dx}{Z}.
\end{aligned}
\]

Hence

\[
\frac d{d\theta}E_{\mu_\theta}(H)
=
E(H_\theta)
+
E(H\Phi_\theta)
-
E(H)E(\Phi_\theta),
\]

which gives

\[
\frac d{d\theta}E_{\mu_\theta}(H)
=
E(H_\theta)
+
\operatorname{Cov}(H,\Phi_\theta).
\]

Applying this formula with

\[
H=\Phi_\theta ,
\]

we obtain

\[
(\log Z)''
=
E(\Phi_{\theta\theta})
+
\operatorname{Cov}
(\Phi_\theta,\Phi_\theta).
\]

Since

\[
\Phi_{\theta\theta}=0 ,
\]

it follows that

\[
\boxed{
(\log Z)''
=
\operatorname{Var}_{\mu_\theta}
(
\log G-\log F
).
}
\]

This is the fundamental variance identity.

We now convert this local identity into a global representation.

For a twice differentiable function $h$ on $[0,1]$,
the Green representation formula gives

\[
h(\theta)
=
(1-\theta)h(0)
+
\theta h(1)
-
\int_0^1
K_\theta(s)h''(s)ds ,
\]

where

\[
K_\theta(s)
=
\begin{cases}
(1-\theta)s,
&
s<\theta ,
\\[1ex]
\theta(1-s),
&
s>\theta .
\end{cases}
\]

Indeed, let

\[
r(\theta)
=
h(\theta)
-
(1-\theta)h(0)
-
\theta h(1).
\]

Then

\[
r(0)=r(1)=0 .
\]

The Green function for the operator
$-\frac{d^2}{d\theta^2}$ on $[0,1]$
with zero boundary values is

\[
G_\theta(s)
=
\begin{cases}
(1-\theta)s,
&
s<\theta ,
\\[1ex]
\theta(1-s),
&
s>\theta .
\end{cases}
\]

Hence

\[
r(\theta)
=
-\int_0^1
G_\theta(s)h''(s)ds ,
\]

which proves the formula.

We apply this representation to

\[
h(\theta)=\log Z(\theta).
\]

Taking

\[
\theta=\frac1q ,
\]

we obtain

\[
\log Z(1/q)
=
\frac1p\log Z(0)
+
\frac1q\log Z(1)
-
\int_0^1
K_{1/q}(s)
(\log Z)''(s)ds .
\]

The endpoint values are

\[
Z(0)
=
\int f^p dx ,
\]

and therefore

\[
\log Z(0)
=
p\log\|f\|_p .
\]

Similarly,

\[
\log Z(1)
=
q\log\|g\|_q .
\]

Moreover,

\[
Z(1/q)
=
\int
(f^p)^{1/p}
(g^q)^{1/q}dx ,
\]

and hence

\[
Z(1/q)
=
\int fg dx .
\]

Therefore,

\[
\begin{aligned}
\log\int fg dx
&=
\frac1p
p\log\|f\|_p
+
\frac1q
q\log\|g\|_q
\\
&\quad
-
\int_0^1
K_{1/q}(s)
(\log Z)''(s)ds .
\end{aligned}
\]

Thus

\[
\log\int fg dx
=
\log\|f\|_p
+
\log\|g\|_q
-
\int_0^1
K_{1/q}(s)
(\log Z)''(s)ds .
\]

Using the variance identity,

\[
(\log Z)''(s)
=
\operatorname{Var}_{\mu_s}
(
\log G-\log F
),
\]

we conclude that

\[
\log
\frac{
\|f\|_p\|g\|_q
}{
\int fg dx
}
=
\int_0^1
K_{1/q}(s)
\operatorname{Var}_{\mu_s}
(
\log G-\log F
)
ds .
\]

The proof is complete.

\end{proof}

\section{Consequences of the Variance Representation Formula}

In this section, we derive several consequences of the main
identity. In particular, the classical Hölder inequality and its
equality characterization follow immediately from the positivity of
variance.

\subsection{Recovery of Hölder's Inequality}

\begin{corollary}[Hölder's Inequality]
Under the assumptions of Theorem 2.1,

\[
\int fg\,dx
\leq
\|f\|_{L^p}
\|g\|_{L^q}.
\]

\end{corollary}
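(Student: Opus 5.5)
The plan is to read Hölder's inequality off the identity of Theorem 2.1, using only the sign of each factor in the integrand on the right-hand side. First, the kernel $K_{1/q}(s)$ is nonnegative on $[0,1]$. On $[0,1/q]$ it equals $s/p\ge 0$, and on $[1/q,1]$ it equals $(1-s)/q\ge 0$. Second, for each $s\in[0,1]$, $\mu_s$ is a probability measure, so $\operatorname{Var}_{\mu_s}(\log G-\log F)\ge 0$ whenever it is defined. Hence the integrand is pointwise nonnegative, and the right-hand side of Theorem 2.1 is nonnegative, possibly $+\infty$.

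It follows that
\[
\log\frac{\|f\|_{L^p}\|g\|_{L^q}}{\int fg\,dx}\ge 0 .
\]
Exponentiating gives $\int fg\,dx\le \|f\|_{L^p}\|g\|_{L^q}$.

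Some degenerate cases have to be dealt with before this makes sense.

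\emph{Case $\int fg\,dx=0$.} The inequality is trivial because the right side is positive by hypothesis. This case includes $Z(1/q)=0$, and it must be set aside because then the logarithm in Theorem 2.1 is undefined.

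\emph{Case $\int fg\,dx>0$.} Here $F$ and $G$ are simultaneously positive on a set of positive measure. Then $Z(s)>0$ for every $s\in[0,1]$, so $\mu_s$ is a genuine probability measure supported on $\{F>0,\,G>0\}$ for $0<s<1$. On that set $\log G-\log F$ is finite, so the variance makes sense in $[0,\infty]$.

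The main point I expect to need care is not the sign argument but making sure the identity of Theorem 2.1 applies here. Theorem 2.1 is stated under exactly the hypotheses of this corollary, so I will simply invoke it. The one thing to check is that it is used only in the case where its left-hand side is a finite real number or $+\infty$, both of which are compatible with a nonnegative right-hand side.

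If one prefers not to depend on differentiability of $\log Z$ at the endpoints, there is an alternative. Apply the Green representation on a subinterval $[\varepsilon,1-\varepsilon]$ to obtain convexity of $\log Z$ there. Then pass to the limit using the continuity of $Z$ at $0$ and $1$, which follows by dominated convergence from $F^{1-\theta}G^\theta\le F+G$. This again yields $\log Z(1/q)\le \tfrac1p\log Z(0)+\tfrac1q\log Z(1)$, which is exactly Hölder's inequality.
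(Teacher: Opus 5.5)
Your main argument is the paper's own proof: nonnegativity of $K_{1/q}$ and of $\operatorname{Var}_{\mu_s}(\log G-\log F)$ in the identity of Theorem 2.1, then exponentiation. Your separate handling of $\int fg\,dx=0$, where the logarithm is undefined, is a correct addition that the paper omits.

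The step that fails is in your alternative route: $Z$ is in general \emph{not} continuous at $0$ and $1$. The bound $F^{1-\theta}G^\theta\le F+G$ is fine. But as $\theta\to0^+$ the integrand converges to $F$ on $\{G>0\}$ and to $0$ on $\{G=0\}$, because there $G^\theta=0$ for $\theta>0$ while $G^0=1$. So dominated convergence gives $Z(0^+)=\int_{\{G>0\}}F\,dx$. This is strictly smaller than $Z(0)=\int F\,dx$ as soon as $F$ charges $\{G=0\}$. For instance, take $p=q=2$, $f$ the indicator of $[0,2]$ and $g$ the indicator of $[0,1]$. Then $Z(\theta)=1$ for $0<\theta\le1$, but $Z(0)=2$.

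The repair is easy. You only need $Z(0^+)\le Z(0)$ and $Z(1^-)\le Z(1)$. When $\int fg\,dx>0$, both limits are positive, being $\int_{\{F>0,G>0\}}F\,dx$ and $\int_{\{F>0,G>0\}}G\,dx$. Since $\log$ is increasing, the convexity inequality on $[\varepsilon,1-\varepsilon]$ then passes to the limit in the right direction.

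The same example shows that your worry about whether Theorem 2.1 ``applies here'' was justified and should not have been dismissed. There, $\mu_s$ is the uniform measure on $[0,1]$ for $0<s\le1$, and $\log G-\log F=0$ on $[0,1]$. So the right-hand side of Theorem 2.1 vanishes, while the left-hand side equals $\log\sqrt2>0$.

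The Green formula with endpoint values $\log Z(0)$ and $\log Z(1)$ requires $\log Z$ to be continuous on $[0,1]$. In general, the correct statement is that the deficit equals the variance integral plus the boundary term $\tfrac1p\log\frac{Z(0)}{Z(0^+)}+\tfrac1q\log\frac{Z(1)}{Z(1^-)}\ge0$. So invoking Theorem 2.1 verbatim, as both you and the paper do, is legitimate only when $\{f>0\}=\{g>0\}$ up to a null set.

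Under the corollary's actual hypotheses, it is your corrected alternative argument that proves H\"older's inequality. The sign argument survives in general only because the omitted boundary terms are nonnegative.
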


\begin{proof}

By Theorem 2.1,

\[
\log
\frac{
\|f\|_p\|g\|_q
}{
\int fg
}
=
\int_0^1
K_{1/q}(s)
\operatorname{Var}_{\mu_s}
(
\log G-\log F
)
ds .
\]

Since

\[
K_{1/q}(s)\geq0
\]

and

\[
\operatorname{Var}_{\mu_s}
(
\log G-\log F
)
\geq0 ,
\]

we have

\[
\log
\frac{
\|f\|_p\|g\|_q
}{
\int fg
}
\geq0 .
\]

Exponentiating both sides gives

\[
\int fg
\leq
\|f\|_p\|g\|_q .
\]

\end{proof}

\subsection{Equality Characterization}

\begin{corollary}[Equality Condition]
Equality holds in Hölder's inequality if and only if

\[
g^q=Cf^p
\]

for some constant

\[
C>0 .
\]

\end{corollary}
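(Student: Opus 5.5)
We read the equality condition off the variance representation of Theorem 2.1, and handle the converse by direct substitution. Throughout, we work under the standing assumptions of Theorem 2.1. In particular $\int fg>0$, so that the logarithm is defined; equivalently $FG\not\equiv0$ on a set of positive measure.

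\emph{Sufficiency.} Suppose $G=CF$ with $C>0$. Then $\|g\|_q^q=C\|f\|_p^p$. On the set where $F>0$ we also have
\[
fg=F^{1/p}G^{1/q}=C^{1/q}F .
\]
Hence
\[
\int fg=C^{1/q}\|f\|_p^p=\|f\|_p^{p/p}\,\|f\|_p^{p/q}\,C^{1/q}=\|f\|_p\|g\|_q ,
\]
which is the desired equality. Alternatively, $\log G-\log F=\log C$ is constant $\mu_s$-a.e., so every variance in Theorem 2.1 vanishes.

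\emph{Necessity.} Assume equality holds. By Theorem 2.1, the left-hand side is $0$, so
\[
\int_0^1 K_{1/q}(s)\operatorname{Var}_{\mu_s}(\log G-\log F)\,ds=0 .
\]
Since $K_{1/q}>0$ on $(0,1)$ and the variance is nonnegative, the variance vanishes for a.e.\ $s\in(0,1)$. More cleanly, $(\log Z)''\ge0$ and $\log Z$ is convex, and equality in the Green formula forces $r\equiv0$. Thus $\log Z$ is affine on $[0,1]$, so $(\log Z)''\equiv0$ on $(0,1)$. Pick any $s\in(0,1)$, say $s=1/q$. Zero variance under $\mu_s$ means $\log G-\log F=c$ $\mu_s$-a.e. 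Now $\mu_s$ has density proportional to $F^{1-s}G^s$, which is positive exactly on $\{F>0,\,G>0\}$. So $G=e^cF$ a.e.\ on $\{F>0,G>0\}$.

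\emph{Main obstacle.} The delicate point is the set where exactly one of $F,G$ vanishes, together with the justification of differentiability used in Theorem 2.1. On $\{F>0,G>0\}$ the argument above is complete. To exclude mass of $F$ where $G=0$, and vice versa, we would return to the endpoint values rather than the variance. Since $\log Z$ is affine, $\log Z(s)\to\log Z(0)$ as $s\to0$. On the other hand, $Z(s)\to\int_{\{G>0\}}F$ as $s\to0^+$, by dominated convergence using $F^{1-s}G^s\le(1-s)F+sG$. Therefore
\[
\int_{\{G>0\}}F=\int F ,
\]
so $F=0$ a.e.\ on $\{G=0\}$. Symmetrically, letting $s\to1$ shows $G=0$ a.e.\ on $\{F=0\}$. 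Combining this with the previous step gives $G=CF$ a.e.\ with $C=e^c>0$, which is $g^q=Cf^p$.

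If the interior smoothness of $\log Z$ is in question, we would note that $\log Z$ is finite on $[0,1]$ by the same pointwise bound $F^{1-s}G^s\le(1-s)F+sG$. We would then apply the argument on $\{F>0,G>0\}$, where the reasoning is purely measure-theoretic once the variance is known to vanish.
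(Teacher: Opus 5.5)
Your proof follows the paper's route for the core step (equality $\Rightarrow$ $\int_0^1K_{1/q}(s)\operatorname{Var}_{\mu_s}(\log G-\log F)\,ds=0$ $\Rightarrow$ $\log G-\log F$ is $\mu_s$-a.e.\ constant). Your direct computation for sufficiency is fine. The real difference is how you handle the set $\{F>0=G\}\cup\{G>0=F\}$. The paper passes from ``constant on the support of $\mu_s$'' straight to $G=e^cF$, although for $s\in(0,1)$ that support is only $\{F>0,\,G>0\}$. Your endpoint comparison ($Z(s)\to\int_{\{G>0\}}F$ as $s\to0^+$ by dominated convergence, versus continuity of the affine function $\log Z$ at $0$, and symmetrically at $1$) is what excludes mass of one function where the other vanishes. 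So your argument is more complete than the paper's.

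This point is not cosmetic. When the supports differ, $\log Z$ can jump at the endpoints, and the identity of Theorem 2.1 itself fails. For $F=\mathbf{1}_{[0,2]}$, $G=\mathbf{1}_{[0,1]}$ the left side is $\frac1p\log2$, while every $\operatorname{Var}_{\mu_s}$ with $0<s<1$ is $0$.

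For this reason, do not obtain ``$\log Z$ is affine on $[0,1]$'' from equality in the Green formula. That formula presupposes $C^2$ regularity up to the endpoints, i.e.\ the very continuity you later use. Instead:
\begin{itemize}
\item Note that $\log Z$ is convex on the closed interval. This is Hölder's inequality applied to $F^{1-\theta}G^\theta=(F^{1-a}G^a)^{1-\lambda}(F^{1-b}G^b)^{\lambda}$, which is valid with $0^0=1$.
\item A convex function on $[0,1]$ that meets its chord at the interior point $1/q$ coincides with that chord.
\end{itemize}

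The interior identity $(\log Z)''=\operatorname{Var}_{\mu_s}(\log G-\log F)$ on $(0,1)$ also needs justification. For $0<\theta<1$ one has $Z(\theta)=\int_{\{FG>0\}}F\,e^{\theta(\log G-\log F)}\,dx$. This is a Laplace transform that is finite on all of $[0,1]$, hence smooth on $(0,1)$ with derivatives taken under the integral sign. With these two adjustments your proof is rigorous and no longer needs Theorem 2.1 at all.

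A minor point: $\int fg>0$ is not implied by the hypotheses of Theorem 2.1, but it holds automatically in both directions of the corollary.
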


\begin{proof}

Assume equality holds. Then

\[
\int fg
=
\|f\|_p\|g\|_q .
\]

Therefore the left-hand side of the identity in Theorem 2.1
vanishes. Hence

\[
\int_0^1
K_{1/q}(s)
\operatorname{Var}_{\mu_s}
(
\log G-\log F
)
ds
=
0 .
\]

Because both factors in the integrand are non-negative,

\[
\operatorname{Var}_{\mu_s}
(
\log G-\log F
)
=
0
\]

for almost every $s$.

Thus

\[
\log G-\log F
\]

is constant on the support of $\mu_s$.

Hence

\[
\log G-\log F
=
c,
\]

which implies

\[
G=e^cF .
\]

Therefore,

\[
g^q=Cf^p .
\]

Conversely, if

\[
g^q=Cf^p,
\]

then

\[
\log G-\log F
\]

is constant and therefore

\[
\operatorname{Var}_{\mu_s}
(
\log G-\log F
)
=0 .
\]

The representation formula gives

\[
\log
\frac{
\|f\|_p\|g\|_q
}{
\int fg
}
=0 ,
\]

which proves equality.

\end{proof}

\subsection{A Quantitative Strict Hölder Inequality}

The exact formula immediately yields quantitative improvements whenever
one has a positive lower bound for the variance along the
interpolation path.

\begin{corollary}[Quantitative Hölder Estimate]
Assume that there exists a constant

\[
c>0
\]

such that

\[
\operatorname{Var}_{\mu_s}
(
\log G-\log F
)
\geq c
\]

for every

\[
s\in[0,1].
\]

Then

\[
\boxed{
\int fg\,dx
\leq
\exp
\left(
-\frac{c}{2pq}
\right)
\|f\|_p
\|g\|_q .
}
\]

\end{corollary}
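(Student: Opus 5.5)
The plan is to feed the assumed pointwise lower bound on the variance into the exact identity of Theorem 2.1. This reduces the claim to computing the total mass of the Green kernel $K_{1/q}$. By Theorem 2.1,
\[
\log\frac{\|f\|_p\|g\|_q}{\int fg\,dx}
=
\int_0^1 K_{1/q}(s)\operatorname{Var}_{\mu_s}(\log G-\log F)\,ds .
\]
Since $K_{1/q}(s)\ge 0$ on $[0,1]$ and $\operatorname{Var}_{\mu_s}(\log G-\log F)\ge c$ for every $s$, the right-hand side is at least $c\int_0^1 K_{1/q}(s)\,ds$.

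The next step is to compute this integral directly from the piecewise definition. On $[0,1/q]$ we have
\[
\int_0^{1/q}\frac{s}{p}\,ds=\frac{1}{2pq^2}.
\]
On $[1/q,1]$, using $1-1/q=1/p$, we have
\[
\int_{1/q}^1\frac{1-s}{q}\,ds=\frac{1}{q}\cdot\frac{(1/p)^2}{2}=\frac{1}{2p^2q}.
\]
Adding the two pieces and using $1/p+1/q=1$ gives
\[
\int_0^1 K_{1/q}(s)\,ds=\frac{1}{2pq}\Bigl(\frac1q+\frac1p\Bigr)=\frac{1}{2pq}.
\]
This agrees with the general value $\int_0^1 K_\theta=\theta(1-\theta)/2$ at $\theta=1/q$. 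Hence the logarithmic deficit is at least $c/(2pq)$.

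To finish, exponentiate to get
\[
\frac{\|f\|_p\|g\|_q}{\int fg}\ge e^{c/(2pq)},
\]
which is the claimed bound after rearranging. If $\int fg=0$, the stated inequality holds trivially, and Theorem 2.1 is only meaningful when $\int fg>0$, so we work in that case.

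I expect no real obstacle. The only point needing care is whether the hypotheses of Theorem 2.1 make the identity valid: $\mu_s$ must be well-defined, and we need differentiability of $\log Z$ up to the endpoints. I would simply take these as granted, since the corollary is stated "under" that theorem.
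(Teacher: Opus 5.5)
Your proposal is correct and follows the paper's proof essentially verbatim: it invokes Theorem 2.1, bounds the variance below by $c$ using $K_{1/q}\ge 0$, computes $\int_0^1 K_{1/q}(s)\,ds=\frac{1}{2pq^2}+\frac{1}{2p^2q}=\frac{1}{2pq}$ (the paper does the same computation via $1-\frac1q=\frac1p$), and exponentiates. Your side remarks go slightly beyond the paper, which simply invokes the theorem: the trivial case $\int fg\,dx=0$, the cross-check $\int_0^1 K_\theta(s)\,ds=\theta(1-\theta)/2$, and flagging the regularity that Theorem 2.1 tacitly needs.
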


\begin{proof}

By Theorem 2.1,

\[
\log
\frac{
\|f\|_p\|g\|_q
}{
\int fg
}
\geq
c
\int_0^1K_{1/q}(s)ds .
\]

It remains to compute the integral of the Green kernel.

Since

\[
K_{1/q}(s)
=
\begin{cases}
\dfrac{s}{p},
&
0\leq s\leq\dfrac1q,
\\[2ex]
\dfrac{1-s}{q},
&
\dfrac1q\leq s\leq1,
\end{cases}
\]

we have

\[
\begin{aligned}
\int_0^1K_{1/q}(s)ds
&=
\frac1p
\int_0^{1/q}s\,ds
+
\frac1q
\int_{1/q}^{1}(1-s)ds
\\
&=
\frac1{2pq^2}
+
\frac{1}{2q}
\left(1-\frac1q\right)^2 .
\end{aligned}
\]

Using

\[
1-\frac1q=\frac1p ,
\]

we obtain

\[
\int_0^1K_{1/q}(s)ds
=
\frac1{2pq}.
\]

Therefore,

\[
\log
\frac{
\|f\|_p\|g\|_q
}{
\int fg
}
\geq
\frac{c}{2pq}.
\]

Exponentiating gives

\[
\int fg
\leq
e^{-c/(2pq)}
\|f\|_p\|g\|_q .
\]

\end{proof}

\subsection{Convexity Interpretation}

\begin{corollary}[Strict Convexity of the Logarithmic Partition Function]
The function

\[
\theta\mapsto \log Z(\theta)
\]

is convex on $[0,1]$.

Moreover,

\[
(\log Z)''(\theta)
=
\operatorname{Var}_{\mu_\theta}
(
\log G-\log F
).
\]

\end{corollary}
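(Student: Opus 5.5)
The second assertion is exactly the fundamental variance identity already established in the proof of Theorem 2.1, so the plan is to quote it and then deduce convexity from the nonnegativity of variance. Concretely, I would recall that with $\Phi(x,\theta)=(1-\theta)\log F+\theta\log G$ one has $Z(\theta)=\int e^{\Phi}\,dx$. Two facts then give the claim:
\[
(\log Z)'(\theta)=E_{\mu_\theta}(\log G-\log F),
\]
and, applying the derivative-of-expectation formula with $H=\Phi_\theta$ and using $\Phi_{\theta\theta}=0$,
\[
(\log Z)''(\theta)=\operatorname{Var}_{\mu_\theta}(\log G-\log F).
\]
Since a variance is nonnegative, $(\log Z)''\ge 0$ on $(0,1)$, and a function with nonnegative second derivative on an interval is convex there. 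Convexity extends to the closed interval $[0,1]$ once $\log Z$ is continuous at the endpoints.

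The main obstacle is justification rather than computation. Differentiating under the integral sign requires integrable majorants for $|\log G-\log F|^k e^{\Phi}$, $k=1,2$, locally uniformly in $\theta$, and these need not exist in general, especially near $\theta=0,1$.

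To handle this, I would first prove convexity of $\log Z$ on $[0,1]$ directly, with no differentiability at all. For $\theta=(1-\lambda)\theta_0+\lambda\theta_1$, write
\[
F^{1-\theta}G^{\theta}=\bigl(F^{1-\theta_0}G^{\theta_0}\bigr)^{1-\lambda}\bigl(F^{1-\theta_1}G^{\theta_1}\bigr)^{\lambda}.
\]
Hölder's inequality with exponents $1/(1-\lambda)$ and $1/\lambda$ then gives
\[
Z(\theta)\le Z(\theta_0)^{1-\lambda}Z(\theta_1)^{\lambda}.
\]
Corollary 3.1 can equally be used here, after relabeling. Finiteness of $Z(0)$ and $Z(1)$ makes $Z$ finite on $[0,1]$. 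Positivity of $Z$ holds away from the degenerate case of disjoint supports, which should be excluded since otherwise $\int fg=0$. Together these make $\log Z$ a finite convex function.

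For the variance formula, on any compact $[a,b]\subset(0,1)$ I would use the elementary bound
\[
|\log(G/F)|^k (G/F)^{\theta} \le C_{k,\varepsilon}\bigl(1+(G/F)^{\theta-\varepsilon}+(G/F)^{\theta+\varepsilon}\bigr).
\]
Multiplying by $F$ dominates the integrand by $F+F^{1-\theta+\varepsilon}G^{\theta-\varepsilon}+F^{1-\theta-\varepsilon}G^{\theta+\varepsilon}$, which is integrable by the finiteness just shown (using $F^{1-\theta}G^{\theta}\le F+G$). This domination legitimizes the differentiations in the interior. Strictness, when $G/F$ is nonconstant on the common support, then follows because $\operatorname{Var}_{\mu_\theta}>0$.
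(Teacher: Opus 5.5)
Your proposal is correct. For the variance identity it follows the paper's computation, but it reaches convexity by a different route. The paper's proof is one line: it quotes $(\log Z)''=\operatorname{Var}_{\mu_\theta}(\log G-\log F)$ from the proof of Theorem 2.1, where differentiation under the integral sign is done formally and tacitly up to the endpoints, and reads off convexity from $\operatorname{Var}\ge 0$.

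You instead get convexity on the closed interval directly from Hölder's inequality applied to $F^{1-\theta}G^{\theta}=(F^{1-\theta_0}G^{\theta_0})^{1-\lambda}(F^{1-\theta_1}G^{\theta_1})^{\lambda}$. You prove the variance identity only on compact subintervals of $(0,1)$, with an explicit majorant. You also supply the strictness (when $G/F$ is not a.e.\ constant on the common support) that the title advertises but the paper never proves.

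Your caution at the endpoints is warranted, not cosmetic. By dominated convergence, $Z(\theta)\to\int_{\{G>0\}}F$ as $\theta\downarrow0$. This is strictly smaller than $Z(0)=\int F$ whenever $F$ charges $\{G=0\}$ (e.g.\ $F=1_{[0,2]}$, $G=1_{[0,1]}$). So $\log Z$ need not be continuous, let alone twice differentiable, at $0$ or $1$, and your side remark about extending convexity ``once $\log Z$ is continuous at the endpoints'' would not apply there.

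What your route gives up is the paper's logical direction (variance $\Rightarrow$ convexity $\Rightarrow$ Hölder). You can keep that direction without invoking Hölder. Interior convexity from $\operatorname{Var}_{\mu_\theta}\ge 0$, combined with $Z(0)\ge\lim_{\theta\downarrow0}Z(\theta)$ and $Z(1)\ge\lim_{\theta\uparrow1}Z(\theta)$, already gives convexity on $[0,1]$. For the same reason, cite the classical Hölder inequality rather than Corollary 3.1. That corollary's proof passes through Theorem 2.1, and hence through exactly the endpoint use of the identity you are avoiding.

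One small fix: the majorant you display depends on $\theta$. To differentiate under the integral on $[a,b]$, replace $\theta\pm\varepsilon$ by $a-\varepsilon$ and $b+\varepsilon$, using $(G/F)^{\theta}\le (G/F)^{a}+(G/F)^{b}$ for $\theta\in[a,b]$.
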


\begin{proof}

This follows directly from Theorem 2.1, since

\[
(\log Z)''(\theta)\geq0.
\]

\end{proof}

The previous result shows that the convexity of the logarithmic
partition function is not merely an abstract property. Its curvature
is exactly described by a fluctuation quantity.

This connects Hölder's inequality with exponential families and
information geometry, where the Hessian of the logarithmic
partition function represents covariance information.

\section{Examples}

\subsection{Gaussian Example}

We consider a simple Gaussian example illustrating the variance
representation formula explicitly.

Let

\[
f(x)=e^{-ax^2},
\qquad
g(x)=e^{-bx^2},
\]

where

\[
a,b>0 .
\]

Then

\[
F=f^p=e^{-apx^2},
\]

and

\[
G=g^q=e^{-bqx^2}.
\]

Hence

\[
F^{1-\theta}G^\theta
=
\exp
\left(
-
\big(
ap(1-\theta)+bq\theta
\big)x^2
\right).
\]

Therefore,

\[
Z(\theta)
=
\int_{\mathbb R}
e^{-A(\theta)x^2}dx,
\]

where

\[
A(\theta)
=
ap(1-\theta)+bq\theta .
\]

Using the Gaussian integral formula,

\[
\int_{\mathbb R}e^{-Ax^2}dx
=
\sqrt{\frac{\pi}{A}},
\]

we obtain

\[
Z(\theta)
=
\sqrt{
\frac{\pi}{A(\theta)}
}.
\]

Consequently,

\[
\log Z(\theta)
=
\frac12
\log\pi
-
\frac12
\log A(\theta).
\]

Differentiating twice gives

\[
(\log Z)''(\theta)
=
\frac12
\frac{
(bq-ap)^2
}{
A(\theta)^2
}.
\]

On the other hand,

\[
\log G-\log F
=
-(bq-ap)x^2 .
\]

Under the probability measure $\mu_\theta$, the distribution is
Gaussian with variance

\[
\frac1{2A(\theta)}.
\]

Hence

\[
\operatorname{Var}_{\mu_\theta}(x^2)
=
\frac1{2A(\theta)^2}.
\]

Therefore,

\[
\operatorname{Var}_{\mu_\theta}
(
\log G-\log F
)
=
(bq-ap)^2
\operatorname{Var}(x^2),
\]

which gives

\[
\operatorname{Var}_{\mu_\theta}
(
\log G-\log F
)
=
\frac12
\frac{
(bq-ap)^2
}{
A(\theta)^2
}.
\]

Thus,

\[
(\log Z)''
=
\operatorname{Var}_{\mu_\theta}
(
\log G-\log F
),
\]

confirming the exact variance identity.

\section{Discussion and Future Directions}

The main result of this paper provides an exact representation of
the Hölder deficit in terms of a variance functional along an
exponential interpolation path.

The classical proof of Hölder's inequality relies on the convexity
of the map

\[
\theta\mapsto \log Z(\theta),
\]

where

\[
Z(\theta)
=
\int F^{1-\theta}G^\theta dx .
\]

The present work reveals that the curvature of this convex function
has a precise probabilistic interpretation:

\[
(\log Z)''(\theta)
=
\operatorname{Var}_{\mu_\theta}
(
\log G-\log F
).
\]

Therefore, the strictness of Hölder's inequality is not simply a
consequence of convexity, but is generated by the fluctuation of the
logarithmic density ratio along the interpolation.

This perspective provides a new interpretation of the Hölder deficit.

Indeed,

\[
\log
\frac{
\|f\|_p\|g\|_q
}{
\int fg
}
\]

is not merely a difference between two quantities. It is the total
variance energy accumulated along the interpolation path:

\[
F^{1-\theta}G^\theta .
\]

The Green kernel

\[
K_{1/q}(s)
\]

determines how local fluctuations contribute to the global deficit.

This representation is closely related to the general philosophy of
information geometry and entropy methods. In exponential families,
the Hessian of the logarithmic partition function is identified with
the covariance matrix of sufficient statistics. The present formula
shows that the same principle appears naturally in functional
inequalities.

\subsection{Possible Extensions}

The argument developed here depends only on two fundamental
ingredients:

\begin{enumerate}

\item the exponential interpolation structure

\[
F^{1-\theta}G^\theta ,
\]

\item the covariance identity for the logarithmic partition
function.

\end{enumerate}

Therefore, the method is expected to extend beyond Hölder's
inequality.

A natural direction is to investigate analogous exact variance
representations for other logarithmically convex inequalities.

\subsubsection{Young's Inequality}

Young's inequality,

\[
ab\leq \frac{a^p}{p}+\frac{b^q}{q},
\]

possesses an underlying convex duality structure.

It is natural to ask whether its deficit admits a representation of
the form

\[
\text{deficit}
=
\int
\text{variance term}
\times
\text{interpolation kernel}.
\]

\subsubsection{Brascamp--Lieb Inequalities}

The Brascamp--Lieb inequalities provide a broad family of functional
inequalities containing many classical results as special cases.

Since the Brascamp--Lieb functional is governed by logarithmic
convexity and Gaussian extremizers, a variance representation may
provide a new interpretation of its deficit structure.

\subsubsection{Prékopa--Leindler Inequality}

The Prékopa--Leindler inequality is often regarded as a functional
form of the Brunn--Minkowski inequality.

Its proof is deeply connected with log-concavity and interpolation.
The variance representation framework developed here suggests a
possible route toward an exact deficit identity for this class of
inequalities.

\subsection{Concluding Remarks}

The main contribution of this work is the identification of an exact
variance structure hidden inside Hölder's inequality.

The formula

\[
\boxed{
\log
\frac{
\|f\|_p\|g\|_q
}{
\int fg
}
=
\int_0^1
K_{1/q}(s)
\operatorname{Var}_{\mu_s}
(
\log G-\log F
)
ds
}
\]

shows that Hölder's inequality is fundamentally an interpolation
phenomenon governed by variance.

This viewpoint replaces the traditional inequality perspective by an
energy representation perspective.

It is expected that this approach may lead to a unified theory of
functional inequalities based on variance identities and
logarithmic interpolation.

\iffalse

\fi

%\end{document}

\end{document}